\newtheorem{proposition}{Proposition}
\newtheorem{assumptionletter}{Assumption}
\newcommand{\Prb}{\mathrm{P}}
\newcommand{\thetaH}{{\theta}_H}
\newcommand{\thetaL}{{\theta}_L}
\newcommand{\hU}{\hat{U}}
\newcommand{\q}{x}
\newcommand{\hx}{\hat{x}}
\newcommand{\htt}{\hat{t}}
\begin{document}
%
\title{Effects of Risk on Privacy Contracts for Demand-Side Management}
%
%
%
\newif\ifarxiv
\arxivtrue

\ifarxiv
\author{
	Lillian J. Ratliff,  Carlos Barreto,      Roy~Dong,
  Henrik~Ohlsson,
        Alvaro~A.~C\'ardenas,
        and~S.~Shankar~Sastry,
\thanks{L.~J.~Ratliff, R.~Dong, H.~Ohlsson, and S.~S.~Sastry are with the 
Department of Electrical Engineering and Computer Sciences, 
University of California, Berkeley, CA, USA.
e-mail: \texttt{\{roydong,ratliffl,ohlsson,sastry\}@eecs.berkeley.edu}.}
\thanks{C. Barreto, and A.~A.~C\'ardenas are with the Department of Computer Science, University of Texas at Dallas, Richardson, TX, USA.
email: \texttt{\{carlos.barretosuarez,alvaro.cardenas\}@utdallas.edu}.}
\thanks{The work presented is supported by the NSF
Graduate Research Fellowship under grant DGE 1106400, NSF
CPS:Large:ActionWebs award number 0931843, TRUST (Team for Research in
Ubiquitous Secure Technology) which receives support from NSF (award
number CCF-0424422), and FORCES (Foundations Of Resilient
CybEr-physical Systems) CNS-1239166, the European Research Council
   under the advanced grant LEARN, contract 267381, a postdoctoral grant from the Sweden-America
   Foundation, donated by ASEA's Fellowship Fund, and  by a postdoctoral
   grant from the Swedish Research Council.
}}
\else
\author{
	Lillian J. Ratliff,~\IEEEmembership{Student~Member,~IEEE,}
  Carlos Barreto,~\IEEEmembership{Student~Member,~IEEE,}
      Roy~Dong,~\IEEEmembership{Student~Member,~IEEE,}
         Henrik~Ohlsson,~\IEEEmembership{Member,~IEEE,}
        Alvaro~A.~C\'ardenas,~\IEEEmembership{Member,~IEEE,}
        and~S.~Shankar~Sastry,~\IEEEmembership{Fellow,~IEEE}
\thanks{L.~J.~Ratliff, R.~Dong, H.~Ohlsson, and S.~S.~Sastry are with the 
Department of Electrical Engineering and Computer Sciences, 
University of California, Berkeley, CA, USA.
e-mail: \texttt{\{roydong,ratliffl,ohlsson,sastry\}@eecs.berkeley.edu}.}
\thanks{C. Barreto, and A.~A.~C\'ardenas are with the Department of Computer Science, University of Texas at Dallas, Richardson, TX, USA.
email: \texttt{\{carlos.barretosuarez,alvaro.cardenas\}@utdallas.edu}.}
\thanks{The work presented is supported by the NSF
Graduate Research Fellowship under grant DGE 1106400, NSF
CPS:Large:ActionWebs award number 0931843, TRUST (Team for Research in
Ubiquitous Secure Technology) which receives support from NSF (award
number CCF-0424422), and FORCES (Foundations Of Resilient
CybEr-physical Systems) CNS-1239166, the European Research Council
   under the advanced grant LEARN, contract 267381, a postdoctoral grant from the Sweden-America
   Foundation, donated by ASEA's Fellowship Fund, and  by a postdoctoral
   grant from the Swedish Research Council.
}}
\fi

%

\maketitle

\begin{abstract}
As smart meters continue to be deployed around the world collecting unprecedented levels of fine-grained data about consumers, we need to find mechanisms that are fair to both, (1) the electric utility who needs the data to improve their operations, and (2) the consumer who has a valuation of privacy but at the same time benefits from sharing consumption data.  In this paper we address this problem by proposing privacy contracts between electric utilities and consumers with the goal of maximizing the social welfare of both. Our mathematical model designs an optimization problem between a population of users that have different valuations on privacy and the costs of operation by the utility. We then show how contracts can change depending on the probability of a privacy breach.  This line of research can help inform not only current but also future smart meter collection practices.
\end{abstract}


%
\IEEEpeerreviewmaketitle

\section{Introduction}
\label{sec:introduction}
\IEEEPARstart{I}{ncreasingly} advanced metering infrastructure (AMI) is
replacing older technology in the electricity grid. Smart meters measure detailed
information about consumer electricity usage every half-hour, quarter-hour, or in some cases, every five minutes. This high-granularity data is needed to support energy efficiency efforts as well as demand-side management.   
However, improper handling of this 
information could also lead to unprecedented invasions of consumer
privacy~\cite{quinn:2009aa, salehie:2012aa, wicker:2011aa, hart:1989aa}. 

It has been shown that energy consumption data reveals a considerable amount
about consumer activities. Furthermore, energy consumption data in combination
with readily available sources of information can be used to discover even more
about the consumer. Authors in~\cite{lisovich:2010aa} argue and experimentally
validate that a privacy
breach can be broadly implemented in two steps. First, energy usage data in combination
with other sensors in the home --- e.g. water and gas usage --- can be used to
track a person's location, their appliance usage, and match
individuals to observed events. In the second step, this learned
information can be combined with demographic data --- e.g. number, age, sex of
individuals in the residence --- to identify activities, behaviors, etc.

Given that smart grid operations inherently have privacy and security risks~\cite{salehie:2012aa},
it would benefit the utility company, to know the answer to the following questions: How do consumers in the population value privacy? How can we quantify privacy? 
How do privacy-aware policies impact smart grid operations? 
%
%
There have been a nuber of works making efforts to address these
questions~\cite{dong:2013aa, dong:2014ab, sankar:2013aa, rajagopalan:2011aa}. In
particular, it has been shown that there is a fundamental utility-privacy
tradeoff in data collection policies in smart grid operations. 

A utility company that desires to conduct some smart grid operation, such as
demand response or direct load control program, requires
high-fidelity data. 
They can observe consumers' electricity usage
but do not know their privacy preferences. We propose
that the utility company can design a screening mechanism --- in particular, a menu of contracts --- to assess 
how consumers value privacy.

In general, contracts are essential for realizing the benefits of economic exchanges including those made in smart grid operations.
Contract theory has been used in energy grid applications including procurement of electric
power from a strategic seller~\cite{tavafoghi:2014aa} and demand response
programs~\cite{fahrioglu:2000aa, fahrioglu:1999aa} among others~\cite{gedra:1994aa}. We take a novel view point by considering privacy to be the good on which we design contracts.

We design contracts given the utility company has an arbitrary smart grid operation they want to implement yet the consumer's preferences are unknown. 
 In particular, based on their valuation of privacy as a good, consumers can select the quality of the service contract with the utility company. Essentially, electricity service is offered as a product line differentiated according to privacy where consumers can select the level of privacy that fits their needs and wallet. The optimal contracts are incentive compatible and individually rational.
 
Further, we assess loss risks due to privacy breaches given the optimally designed contracts. We design new contracts when these risks are explicitly considered by the utility company. We provide a characterization of the contracts designed with and without loss risks. We show that there are inefficiencies
when we consider losses incurred due to privacy breaches and thus, in some
cases, the utility company has an incentive to offer compensation to the consumer in the event of a privacy breach, invest in security measures, and purchase insurance. 

The purpose of this paper is to provide qualitative assessment of privacy
contracts for demand-side management through the use
of simple examples which have all the interesting properties of the
larger problem such as asymmetric information. 
In Section~\ref{sec:contracts}, we study a two-type model; there are two types of
consumers --- ones that have a high valuation of privacy and ones that have a
low valuation of privacy --- and characterize the solution for the contract design
problem. 
In Section~\ref{sec:risk}, we characterize the
contract solution when the consumer is \emph{risk-adverse} and the risk of a privacy breach is
explicitly modeled in the contract design.  In
Section~\ref{sec:dlc}, we present an example where the utility company is
interested in implementing a direct load control scheme and designs contracts
with and without risk. Finally, in Section~\ref{sec:discussion}, we provide discussion and future research directions.
\section{Privacy Contracts}
\label{sec:contracts}
In this section, we discuss the design of privacy-based contracts that are offered
to consumers, who possess private information, i.e., the utility does not know the characteristics of each user.  
We consider a model in which there are only two classes of users and we utilize standard
results from the theory of screening (see, e.g.,~\cite{weber:2011aa,
bolton:2005aa}) to develop
a framework for designing privacy contracts. In general, the fundamental
characteristics of the two-type problem extend to the any number of types
including a continuum of types. 

We model privacy-settings on smart meters (e.g., sampling rate) as a good. The \emph{quality} of the
good is either a high-privacy setting $\q_H$ or a low-privacy setting $\q_L$, which can be interpreted for example as low and high sampling rates, respectively.
Each consumer selects $\q\in\mc{X}=\{\q_H, \q_L\}\subset \mb{R}$ where $-\infty<\q_L<\q_H<\infty$.
  On the other hand, the consumer's valuation of privacy is 
  characterized by the parameter $\theta$, commonly called the  \emph{type} of a user. The type of an agent is seen as a piece of private information that determines the willingness of a user to pay for a good: in our privacy setting, the type characterizes the electricity consumption privacy needs of the user.
 In our model, we assume $\theta$ takes only two values, i.e.,   
  $\theta\in\{\thetaL, \thetaH\} \subset \mb{R}$, where 
  $\thetaL<\thetaH$. The type $\theta$ is distinct from the private information that is subject to a privacy breach.

The consumers type $\theta$ is related to his willingness to pay in the following way: if the utility company announces a price $t$ for choosing $\q$, the type-dependent consumer's utility is equal to zero if he does not select a privacy setting $\q$, and
\begin{equation}
  \hU(\q,\theta)-t\geq 0
  \label{eq:ir}
\end{equation}
if he does select a privacy setting. The case in which the consumer does not select a privacy setting is considered the \emph{opt-out} case in which consumer exercises his right to not participate. 
The inequality in \eqref{eq:ir} is often called the \emph{individual
rationality} constraint. 

We have the following assumptions on the consumer's
utility function which represents its preferences:
\begin{assumptionletter}
  \label{ass:U_concave}
The utility function $\hat{U}:\mathbb{R} \times \Theta \rightarrow \mathbb{R}$ is strictly increasing in $(x, \theta) \in \mathbb{R} \times \Theta$, concave in $x \in \mathbb{R}$, and differentiable with respect to $x$.
\end{assumptionletter}
  \begin{assumptionletter}
    \label{ass:marginal}
  The marginal gain from raising the value of the privacy
setting $\q$ is greater for type $\thetaH$, i.e. $\hU(\q, \thetaH)-\hU(\q, \thetaL)$ is increasing in $\q$.
  \end{assumptionletter}

Since we have only two types, the contracts offered will be indexed by the privacy settings $\q_L$ and $\q_H$.
Further, as we mentioned before, the consumer can opt-out by not selecting a privacy option at all. Hence, we need to constrain the mechanism design problem by enforcing the inequality given in Equation~\eqref{eq:ir} for each value of $\theta\in \{\thetaL, \thetaH\}$. In addition, we need to enforce \emph{incentive-compatibility} constraints  
\begin{equation}
  \hU(\q_H, \thetaH)-t_H\geq \hU(\q_L, \thetaH)-t_L
  \label{eq:ic1}
\end{equation}
and 
\begin{equation}
  \hU(\q_L, \thetaL)-t_L\geq \hU(\q_H, \thetaL)-t_H
  \label{eq:ic2}
\end{equation}
where the first inequality says that given the price $t_H$ a consumer of type $\thetaH$ should prefer the high-privacy setting $\q_H$ and the second inequality says that given the price $t_L$ a consumer of type $\thetaL$ should prefer the low-privacy setting $\q_L$.

The utility company has unit utility
\begin{equation}
  v(\q, t)=-g(\q)+t
  \label{eq:uc_utility}
\end{equation}
where $g:\mc{X}\rar\mb{R}$ is the unit cost experienced  by the utility company with a privacy setting $\q$, which satisfies the following assumption.
\begin{assumptionletter}\label{as:g_convex}
The cost function $g:\mathbb{R} \rightarrow \mathbb{R}$ is a strictly increasing, convex, and differentiable function.
\end{assumptionletter}
This assumption is reasonable since
a low-privacy setting $\q_L$ provides the utility company with the high-granularity data it needs to efficiently operate and maintain the smart grid~\cite{dong:2014ab, sankar:2013aa, rajagopalan:2011aa}. 


Let the expected profit of the utility company be given by
\begin{equation}
  \Pi(t_L, \q_L, t_H, \q_H)=(1-p)v(\q_L,t_L)+pv(\q_H, t_H)
  \label{eq:profit1}
\end{equation}
where $p=\Prb(\theta=\thetaH)=1-\Prb(\theta=\thetaL)\in (0, 1)$ and $\Prb(\cdot)$ denotes probability.
The screening problem is to design the contracts, i.e. $\{(t_L, \q_L), (t_H,
\q_H)\}$ where $t_L, t_H\in \mb{R}$, so that the utility companies expected
profit is maximized.
In particular, to find the optimal pair of contracts, we solve the following optimization problem:
\begin{align} 
  \max_{\{(t_L, \q_L), (t_H, \q_H)\}}& \Pi(t_L, \q_L, t_H, \q_H)\tag{P-1} \label{eq:P1}
\\
\text{s.t.}\qquad&\ \hU(\q_H, \thetaH)-t_H\geq \hU(\q_L, \thetaH)-t_L\tag{IC-1}\label{eq:IC1}\\
&\ \hU(\q_L, \thetaL)-t_L\geq \hU(\q_H, \thetaL)-t_H\tag{IC-2}\label{eq:IC2}\\
&\ \hU(\q_L,\thetaL)-t_L\geq 0\tag{IR-1}\label{eq:IR1}\\
&\ \hU(\q_H,\thetaH)-t_H\geq 0\tag{IR-2}\label{eq:IR2}\\
&\ \q_L\leq \q_H\notag
 \end{align}

 Depending on the form of $\hU(\q, \theta)$ and $g(\q)$ problem~\eqref{eq:P1} can
 be difficult to solve. Hence, we reduce the problem using characteristics of
 the functions and constraints. We remark that this process of reducing the constraint set in contract design with a finite number of types is well known (see, e.g., ~\cite{basov:2005aa,bolton:2005aa,weber:2011aa}) and sometimes referred to as the constraint reduction theorem.
 
First, we show that \eqref{eq:IR1} is active. Indeed, suppose not. Then, $\hU(\q_L, \thetaL)-t_L>0$ so that, from the first incentive compatibility constraint \eqref{eq:IC1}, we have
\begin{equation}
  \hU(\q_H, \thetaH)-t_H\geq \hU(\q_L, \thetaH)-t_L\geq  \hU(\q_L, \thetaL)-t_L>0
  \label{eq:binding}
\end{equation}
where the second to last inequality holds since $\hU(\q, \theta)$ is increasing in $\theta$ by assumption. 
As a consequence, the utility company can increase the price for both types since neither incentive compatibility constraint would be active. This would lead to an increase in the utility company's pay-off and thus, we have a contradiction. 

Now, since $\hU(\q_L, \thetaL)=t_L$, the last inequality in \eqref{eq:binding} is equal to zero. This implies that \eqref{eq:IR2} is redundant.
Further, this argument implies that the constraint \eqref{eq:IC1} is active. 
Indeed, again suppose not. Then, 
\begin{equation}
   \hU(\q_H, \thetaH)-t_H> \hU(\q_L, \thetaH)-t_L\geq  \hU(\q_L, \thetaL)-t_L=0
  \label{eq:binding2}
\end{equation}
so that it would be possible for the utility company to decrease the incentive $t_H$ without violating \eqref{eq:IR2}. 

By Assumption~\ref{ass:marginal} and the fact that
 \eqref{eq:IC1} is active, we have
\begin{equation}
  t_H-t_L=\hU(\q_H, \thetaH)-\hU(\q_L, \thetaH)\geq \hU(\q_H, \thetaL)-\hU(\q_L, \thetaL).
  \label{eq:binding3}
\end{equation}
This inequality implies that we can ignore \eqref{eq:IC2}. Further, since $\hU$ is increasing in $(\q, \theta)$ and by assumption $\thetaH>\thetaL$, we have that $\q_L< \q_H$. 

Thus, we have reduced the constraint set:
\begin{align}
  t_H-t_L&=\hU(\q_H, \thetaH)-\hU(\q_L, \thetaH),\label{eq:newconst1}\\
 t_L&= \hU(\q_L, \thetaL).
 \label{eq:newconst2}
\end{align}
The optimization problem (P-1) reduces to two independent optimization problems:
\begin{flalign}
  \label{eq:P3a}
  \hspace{20pt} \max_{\q_L}\{\hU(\q_L, \thetaL)-(1-p)g(\q_L)-p\hU(\q_L, \thetaH)\} && \tag{P-2a}
\end{flalign}
and
\begin{flalign}
  \label{eq:P3b}
  \hspace{20pt} \max_{\q_H}\{\hU(\q_H, \thetaH)-g(\q_H)\}. && \tag{P-2b}
\end{flalign}
We will denote the solution to the two optimization problems above by
$(\hat{x}^\ast_i,\hat{t}^\ast_i)$ and for reasons that will become apparent in
the next paragraph we call it the \emph{second-best} solution.

We now claim that the optimal contract satisfies $\hx_L^\ast<\hx_H^\ast$. Indeed,
consider the case when the utility company knows the type of the consumer
that it faces, i.e. the solution under full information which we refer to as the
\emph{first-best} solution. We denote the first-best solution by $(\hx_i^\dagger,
\htt_i^\dagger)$
for $i\in\{L,H\}$ where the pair solves
\begin{equation}
  \max_{\hx_i} \hU(\hx_i, \theta_i)-g(\hx_i)
  \label{eq:fbsol}
\end{equation}
and $\htt_i^\dagger=\hU(\hx_i^\dagger, \theta_i)$. Note that throughout the rest
of the paper we will use the notation $(\cdot)^\dagger$ to denote the first-best
solution and $(\cdot)^\ast$ to denote the second-best solution.

Notice that~\eqref{eq:P3b} is exactly the optimization problem the utility
  company would solve to determine the first-best solution for the high-type.
  Hence, even when there is hidden information, the high-type will always be
  offered the first-best quality and first-best price, i.e.
  $(\hat{x}_H^\ast,\hat{t}_H^\ast)=(\hat{x}_H^\dagger, \hat{t}_H^\dagger)$. This
  is to say that the high-type receives an efficient allocation.
  
  Assumption~\ref{ass:marginal} implies that the first-best solution $\hx_i^\dagger(\theta)$ is increasing
  in $\theta$. Further, $\hU(\hx_L, \thetaH)-\hU(\hx_L, \thetaL)$ is increasing
  in $\hx_L$ and non-negative so that $\hx^\dagger_L\geq \hx_L^\ast$. Thus
   $\hx_L^\ast\leq \hx_L^\dagger<\hx_H^\dagger=\hx_H^\ast$.

  This result also shows that when there is asymmetric information, the
  low-type gets zero surplus and a quality level that is inefficient when compared
  to the first-best solution. On the other hand, as we have pointed out, the
  high-type is offered the first-best quality and has more surplus. Further, the
  high-type gets positive \emph{information rent}:
  \begin{equation}
    \htt^\ast_H=\htt_H^\dagger-\underbrace{\left( \hU(\hx_L^\ast,
    \thetaH)-\hU(\hx_L^\ast, \thetaL) \right)}_{\text{\emph{information rent}}}.
    \label{eq:inforent}
  \end{equation}
  We will see the effects of this in the example presented in
  Section~\ref{sec:dlc}.

\section{Effects of Risk on Privacy Contracts}
\label{sec:risk}

In this section, we are interested in analyzing the effect of loss risk (due to privacy breaches) in contracts. 

Let us consider that users might suffer privacy breaches of cost $\ell(\theta)$, with probability $1-\eta(x)$. Then, their expected profit can be expressed as:
\begin{equation}\label{eq:U_risk}
U(x,\theta) = \hat{U}(x, \theta) - (1-\eta(x)) \ell(\theta).
\end{equation}
The characteristics of the privacy breach are summarized in the following assumption.
\begin{assumptionletter}\label{as:eta}
$\eta:\mathbb{R} \rightarrow [0,1]$ (probability of avoiding a privacy breach)
is strictly increasing with respect to the privacy $x$. 
The perceived loss due to a privacy breach $\ell:\Theta \rightarrow \mathbb{R}_{\geq 0}$ is increasing with respect to the type of each user. 
\end{assumptionletter}

Roughly speaking, the higher the privacy setting, the less likely a privacy
breach will occur.
Furthermore, a user with high privacy needs might experience smaller costs compared to a user with low privacy settings.


The individual rationality constraints for the case where consumers have privacy
risks can be expressed as
\begin{equation}
  \hU(x, \theta)-t\geq (1-\eta(x)) \ell(\theta).
  \label{eq:IRwrisk}
\end{equation}
Recall that the optimal contract without risk for the low-type,
$(\hat{x}_L^\ast, \hat{t}_L^\ast)$, satisfies \eqref{eq:IR1} with strict
equality, i.e. $\hU(\hx_L^\ast, \thetaL)=\hat{t}_L^\ast$. Thus, the optimal
contract of the previous section violates \eqref{eq:IRwrisk} unless either
$\ell(\thetaL)=0$ or $\eta(\hx_L^\ast)=1$. Consequently, consumers with low
privacy preferences $\thetaL$ might do better by opting out. 

On the other hand, the \emph{incentive compatibility} constraint \eqref{eq:IC2} can
be expressed as 
\begin{equation}\label{eq:IC2_risk}
\hat{U}(x_L, \thetaL) - t_L \geq \hat{U}(x_H, \thetaL) - t_H  + (\eta(x_H) - \eta(x_L)) \ell(\thetaL)
\end{equation}
when we consider privacy risks.
Note that since $\hx_L^\ast<\hx_H^\ast$, $\eta(\hx_H^\ast) - \eta(\hx_L^\ast) > 0$. 
Hence, the inequality in \eqref{eq:IC2_risk} might not be satisfied by the
optimal contract that does not consider risk. Consequently, consumers with low
preferences $\thetaL$ might want to choose a high privacy contract.

Intuitively, the utility company might need to decrease the cost $t_L$ and/or
increase the privacy setting $x_L$  in order to promote participation of
consumers.
These measures might decrease the benefit and fees collected by the utility
company. Hence, there is an incentive for the utility company to purchase insurance and/or invest in security.

In the rest of this section, we characterize the contracts with risk loss and
the utility company's profit.

\subsection{Contracts with Loss Risk}

Suppose that $U$ defined in \eqref{eq:U_risk} satisfies Assumption
\ref{ass:U_concave}. Then the analysis in Section~\ref{sec:contracts} holds when
we replace $\hU$ with $U$.
Thus, we are going to compare the settings of contracts with and without risk, denoted as $\{x_L, x_H, t_L, t_H\}$, and $\{\hat{x}_L, \hat{x}_H, \hat{t}_L, \hat{t}_H\}$, respectively.

First, let us extract some inequalities that are going to be used to compare the contracts. 
From \eqref{eq:U_risk} we can extract the marginal utility with loss risk:
\begin{equation}\label{eq:marginal_utility}
\frac{\partial}{\partial x} U(x, \theta) = \frac{\partial}{\partial x} \hat{U}(x, \theta) + \frac{\partial}{\partial x} \eta(x) \ell(\theta).
\end{equation}
Since $U$ is strictly increasing by Assumption~\ref{ass:U_concave}, we have
\begin{equation}
\frac{\partial}{\partial x} U(x, \theta) > 0.
\end{equation}
Furthermore, 
since $\ell(\theta)\geq 0$ and the probability of a successful attack ($1-\eta(x))$ decreases with higher
privacy settings, we have
\begin{equation}\label{eq:deriv_prob}
\frac{\partial}{\partial x} \eta(x) > 0.
\end{equation}
Hence,
we can extract the following inequality from \eqref{eq:marginal_utility}
\begin{equation}\label{eq:ineq1}
 \frac{\partial}{\partial x} U(x,\thetaH) \geq \frac{\partial}{\partial x} \hat{U}(x,\thetaH).
\end{equation}
From this inequality we can infer that the presence of risk increases the \emph{valuation} that each user gives to its privacy.

Now, we proceed to analyze changes in the optimal contract with the inclusion of
loss risk. First, let us show that the privacy setting of a user with high-type is greater in presence of risk. 

\begin{proposition}\label{prop:x_H}
 The privacy policy of an agent with type $\thetaH$ is higher with loss risk, that is, $x_H^* \geq \hat{x}_H^*$.
\end{proposition}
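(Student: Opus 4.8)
The plan is to reduce the statement to a one–dimensional monotone comparative–statics fact. By the constraint–reduction argument of Section~\ref{sec:contracts}, carried out verbatim with $U$ in place of $\hU$ (legitimate by the standing hypothesis of this subsection that $U$ satisfies Assumption~\ref{ass:U_concave}), the high–type policy in the risky problem solves the decoupled program $\max_{x_H}\{U(x_H,\thetaH)-g(x_H)\}$, exactly as $\hx_H^\ast$ solves $\max_{x_H}\{\hU(x_H,\thetaH)-g(x_H)\}$ via \eqref{eq:P3b}. So the proposition is equivalent to: the maximizer of $x\mapsto U(x,\thetaH)-g(x)$ is at least the maximizer of $x\mapsto \hU(x,\thetaH)-g(x)$.

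First I would record concavity and differentiability of both objectives. Write $\phi(x)=\hU(x,\thetaH)-g(x)$ and $\Phi(x)=U(x,\thetaH)-g(x)$. Then $\phi$ is concave by Assumptions~\ref{ass:U_concave} and \ref{as:g_convex}, $\Phi$ is concave by the standing hypothesis of this subsection and Assumption~\ref{as:g_convex}, and both are differentiable in $x$. The first–order condition at the (interior) optimizer $\hx_H^\ast$ gives $\phi'(\hx_H^\ast)=0$. Next I would invoke inequality~\eqref{eq:ineq1}, which was derived from \eqref{eq:marginal_utility}, \eqref{eq:deriv_prob} and $\ell(\thetaH)\geq 0$ and states that $\tfrac{\partial}{\partial x}U(x,\thetaH)\geq \tfrac{\partial}{\partial x}\hU(x,\thetaH)$ for every $x$; subtracting $g'(x)$ yields $\Phi'(x)\geq \phi'(x)$ pointwise, hence $\Phi'(\hx_H^\ast)\geq \phi'(\hx_H^\ast)=0$. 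Since $\Phi$ is concave, $\Phi'$ is nonincreasing, so its zero — i.e.\ the maximizer $x_H^\ast$ of $\Phi$ — cannot lie strictly to the left of $\hx_H^\ast$, which is precisely $x_H^\ast\geq \hx_H^\ast$.

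The only genuine subtlety is the handling of non‑interior optima and the implicit regularity ensuring the maxima in \eqref{eq:P3b} (and its risky analogue) are attained and characterized by their first–order conditions; I would dispose of this by noting that a maximizer exists on the relevant range and that, should it sit at a left endpoint, the pointwise bound $\Phi'\geq\phi'$ together with concavity of $\Phi$ still forces the perturbed maximizer weakly to the right. As a remark, if in addition $\ell(\thetaH)>0$, then \eqref{eq:deriv_prob} makes the inequality strict, $\Phi'(\hx_H^\ast)>0$, and the same reasoning gives the strict conclusion $x_H^\ast>\hx_H^\ast$ whenever the optimum is interior.
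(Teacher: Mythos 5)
Your proposal is correct and follows essentially the same route as the paper's proof: decouple the high-type problem, compare first-order conditions using the pointwise marginal inequality \eqref{eq:ineq1}, and conclude via monotonicity of the derivative of a concave objective. The only cosmetic difference is that you evaluate the risky objective's derivative at $\hx_H^\ast$ and use concavity of $U(\cdot,\thetaH)-g(\cdot)$, whereas the paper evaluates the risk-free derivative at $x_H^\ast$ and uses concavity of $\hU(\cdot,\thetaH)-g(\cdot)$ --- a mirror image of the same argument; your added care with boundary optima and the strictness remark are fine but not needed.
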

\begin{proof}
%
From \eqref{eq:P3a}, we have the first-order conditions for the
case without risk,
\begin{equation}\label{eq:foc2_a}
\frac{\partial}{\partial x} \left( \hat{U}(x,\thetaH)  - g(x) \right) \Big|_{x=\hat{x}_H^*}  = 0,
\end{equation}
and the first-order conditions for the case with risk,
\begin{equation}\label{eq:foc2_b}
\frac{\partial}{\partial x} \left( U(x,\thetaH)  - g(x) \right) \Big|_{x=x_H^*}  = 0.
\end{equation}
%
%
Thus \eqref{eq:ineq1} implies
\begin{equation}
0 \geq \frac{\partial}{\partial x} \left( \hat{U}(x,\thetaH)  - g(x) \right) \Big|_{x=x_H^*}.
\end{equation}
Note that $\frac{\partial}{\partial x} \left( \hat{U}(x,\thetaH)  - g(x)
\right)$ is a decreasing function of $x$. Hence, the optimal privacy setting
without risk $\hat{x}_H^*$ (which satisfies \eqref{eq:foc2_a}) must be smaller than the privacy setting with risk, i.e.,    $x_H^* \geq \hat{x}_H^*$. This result is independent of the population distribution $p$ and the low type  $\thetaL$. 
\end{proof}


Now, we analyze the privacy settings for consumers with low-type. 

\begin{proposition}\label{prop:x_L}
The privacy of low-type agents in contracts with and without loss risk ($x_L^*$
and $\hat{x}_L^*$ resp.) satisfy the following inequalities:
\begin{equation}
 \left\{
 \begin{array}{ l l }
  x_L^* \geq \hat{x}_L^*, & \text{if } p \leq \bar{p}, \\
  x_L^* < \hat{x}_L^*, & \text{if } p > \bar{p}, \\
 \end{array}
 \right.
\end{equation}
with $\bar{p} = \frac{\ell(\thetaL)}{\ell(\thetaH)}$.

\end{proposition}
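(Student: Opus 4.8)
The plan is to repeat the first-order-condition comparison used in the proof of Proposition~\ref{prop:x_H}, but now for the reduced low-type problem~\eqref{eq:P3a} with $\hU$ replaced by $U$. First I would write that objective explicitly: substituting $U(x,\theta)=\hU(x,\theta)-(1-\eta(x))\ell(\theta)$ into~\eqref{eq:P3a} and collecting terms, the low-type objective with risk becomes
\[
F(x_L)+(1-\eta(x_L))\bigl(p\,\ell(\thetaH)-\ell(\thetaL)\bigr),
\]
where $F(x_L)=\hU(x_L,\thetaL)-(1-p)g(x_L)-p\,\hU(x_L,\thetaH)$ is exactly the no-risk objective of~\eqref{eq:P3a}, whose maximizer is $\hx_L^\ast$. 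So risk contributes only the separable perturbation $(1-\eta(x_L))(p\ell(\thetaH)-\ell(\thetaL))$, and the threshold $\bar p=\ell(\thetaL)/\ell(\thetaH)$ (which is $\le 1$ since $\ell(\thetaL)\le\ell(\thetaH)$ by Assumption~\ref{as:eta}) is precisely where its coefficient $p\ell(\thetaH)-\ell(\thetaL)$ changes sign.

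Next I would compare the two first-order conditions. Differentiating, the FOC for the risky problem at its optimum reads $F'(x_L^\ast)=\eta'(x_L^\ast)\bigl(p\ell(\thetaH)-\ell(\thetaL)\bigr)$, while $F'(\hx_L^\ast)=0$. Since $\eta$ is strictly increasing by Assumption~\ref{as:eta}, $\eta'(x_L^\ast)>0$, so $F'(x_L^\ast)\le 0$ when $p\le\bar p$ (with equality at $p=\bar p$) and $F'(x_L^\ast)>0$ when $p>\bar p$. Using that $F'$ is decreasing, $F'(x_L^\ast)\le 0=F'(\hx_L^\ast)$ forces $x_L^\ast\ge\hx_L^\ast$ when $p\le\bar p$, and $F'(x_L^\ast)>0=F'(\hx_L^\ast)$ forces $x_L^\ast<\hx_L^\ast$ when $p>\bar p$, which is the stated dichotomy. (The intuition is that $1-\eta(x_L)$ is decreasing, so the perturbation pushes $x_L$ up when its coefficient is negative, i.e. $p<\bar p$, and down otherwise.)

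The delicate step is exactly the monotonicity of $F'$. Unlike the high-type objective $\hU(\cdot,\thetaH)-g(\cdot)$ in Proposition~\ref{prop:x_H}, which is concave because $\hU$ is concave in $x$ and $g$ convex, here $F$ contains the convex term $-p\,\hU(\cdot,\thetaH)$, so concavity of $F$ (and of the risky objective) does not follow directly from Assumptions~\ref{ass:U_concave}--\ref{as:g_convex}. I would therefore either carry the standard regularity hypothesis that~\eqref{eq:P3a} is strictly concave — which is implicitly what makes $\hx_L^\ast$ well defined in Section~\ref{sec:contracts} — or, more carefully, argue that $F'$ crosses zero exactly once using the structure of the second-best solution established there; once single-crossing of $F'$ is available, the sign comparison above yields the result immediately.
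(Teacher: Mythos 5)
Your argument is essentially the paper's own proof: you compare the first-order conditions of the reduced low-type problem with and without risk, observe that risk adds the term $\eta'(x)\bigl(\ell(\thetaL)-p\,\ell(\thetaH)\bigr)$ to the no-risk marginal (your separable-perturbation form of the objective is just the paper's \eqref{eq:equivalence} integrated), and conclude from the sign flip at $\bar p=\ell(\thetaL)/\ell(\thetaH)$ together with monotonicity of the no-risk marginal. Your explicit caveat that this monotonicity (concavity of the reduced low-type objective, which contains $-p\,\hU(\cdot,\thetaH)$) does not follow from Assumptions~\ref{ass:U_concave}--\ref{as:g_convex} and must be imposed as a regularity/single-crossing condition is a point the paper's proof uses implicitly without stating.
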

\begin{proof}
%
From \eqref{eq:P3b} we get the first-order conditions for the case without risk,
\begin{equation}\label{eq:foc1}
\frac{\partial}{\partial x} \left( \hat{U}(x,\thetaL) - p \hat{U}(x,\thetaH) \right)  \Big|_{x = \hat{x}_L^*} - (1-p) \frac{\partial}{\partial x} g(x) \Big|_{x = \hat{x}_L^*} = 0,
\end{equation}
and the first-order conditions for the case with loss risk,
\begin{equation}\label{eq:foc2}
\frac{\partial}{\partial x} \left( U(x,\thetaL) - p U(x,\thetaH) \right)  \Big|_{x = x_L^*} - (1-p) \frac{\partial}{\partial x} g(x) \Big|_{x = x_L^*} = 0.
\end{equation}
We use \eqref{eq:U_risk} to rewrite the first term of \eqref{eq:foc2} as
%
\begin{align}\label{eq:equivalence}
\frac{\partial}{\partial x} \left( U(x,\thetaL) - p U(x,\thetaH) \right)  
&= 
\frac{\partial}{\partial x} \left( \hat{U}(x,\thetaL) - p \hat{U}(x,\thetaH)
\right)\notag  \\ 
 &\quad+\frac{\partial}{\partial x} \eta(x) \left( \ell(\thetaL) - p \ell(\thetaH)
\right)  .
\end{align}

Now, let us consider three cases. First, if $ \ell(\thetaL) - p \ell(\thetaH) =
0$, then, from \eqref{eq:equivalence} we know that the contracts with and without risk have the same solution, that is, $x_L^* = \hat{x}_L^*$.

If $ \ell(\thetaL) - p \ell(\thetaH) > 0$, then 
\begin{equation}\label{eq:ineq_case_1}
\frac{\partial}{\partial x} \left( U(x,\thetaL) - p U(x,\thetaH)  \right) 
> 
\frac{\partial}{\partial x} \left( \hat{U}(x,\thetaL) - p \hat{U}(x,\thetaH)  \right).
\end{equation}
%
Then \eqref{eq:ineq_case_1} and \eqref{eq:foc2} imply
\begin{equation}
  \label{eq:neg} 0 > \frac{\partial}{\partial x} \left( \hat{U}(x,\thetaL) - p \hat{U}(x,\thetaH) - (1-p) \frac{\partial}{\partial x} g(x) \right) \Bigg|_{x = x_L^*}.
\end{equation}
Hence, by \eqref{eq:foc1}, $x_L^\ast>\hx_L^\ast$.

Finally, if $ \ell(\thetaL) - p \ell(\thetaH) < 0$, then 
\begin{equation}\label{eq:ineq_case_2}
\frac{\partial}{\partial x} \left( U(x,\thetaL) - p U(x,\thetaH)  \right) 
< 
\frac{\partial}{\partial x} \left( \hat{U}(x,\thetaL) - p \hat{U}(x,\thetaH)  \right).
\end{equation}
In this case, we can use the first-order conditions in \eqref{eq:foc1} and \eqref{eq:foc2} and the
inequality \eqref{eq:ineq_case_2} to prove that  $x_L^* < \hat{x}_L^*$.
\end{proof}


The following result generalizes the dependence of the privacy $x_L$ with
respect to $p$. In particular, the privacy of the low-type users is decreasing with respect to $p$, regardless of the presence of loss risk.

\begin{proposition}
 The optimal  privacy setting $x_L^*$ is decreasing with respect to $p$.
\end{proposition}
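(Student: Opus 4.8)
The plan is to mimic the comparative‑statics argument in the proof of Proposition~\ref{prop:x_L}, but now perturbing the population parameter $p$ rather than toggling the loss term $\ell$; since $U$ reduces to $\hU$ when $\ell\equiv 0$, this covers the contracts with and without risk simultaneously. Write the reduced objective for the low type (with $\hU$ replaced by $U$) as
\[
\Phi(x,p)\;=\;U(x,\thetaL)-(1-p)\,g(x)-p\,U(x,\thetaH),
\]
so that $x_L^\ast(p)=\arg\max_x\Phi(x,p)$ and its first‑order condition $\tfrac{\partial}{\partial x}\Phi(x,p)=0$ is exactly \eqref{eq:foc2} (and \eqref{eq:foc1} in the risk‑free case). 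As in the first‑order arguments already invoked for Propositions~\ref{prop:x_H}--\ref{prop:x_L}, I take $\Phi(\cdot,p)$ to be strictly concave, so that $x_L^\ast(p)$ is its unique stationary point.

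Step~1: establish a single‑crossing property for $U$. Differentiating Assumption~\ref{ass:marginal} gives $\tfrac{\partial}{\partial x}\hU(x,\thetaH)\ge\tfrac{\partial}{\partial x}\hU(x,\thetaL)$; since $\tfrac{\partial}{\partial x}\eta(x)>0$ by \eqref{eq:deriv_prob} and $\ell(\thetaH)\ge\ell(\thetaL)\ge 0$ by Assumption~\ref{as:eta}, equation \eqref{eq:marginal_utility} upgrades this to
\[
\tfrac{\partial}{\partial x}U(x,\thetaH)\;\ge\;\tfrac{\partial}{\partial x}U(x,\thetaL)\qquad\text{for all }x .
\]
Step~2: compare first‑order conditions at two values of $p$. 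Fix $p_1<p_2$ in $(0,1)$ and set $x_i=x_L^\ast(p_i)$. Using the first‑order condition at $x_1$ to eliminate $g'(x_1)$, a one‑line computation gives
\[
\tfrac{\partial}{\partial x}\Phi(x_1,p_2)-\tfrac{\partial}{\partial x}\Phi(x_1,p_1)=(p_2-p_1)\Bigl(g'(x_1)-\tfrac{\partial}{\partial x}U(x_1,\thetaH)\Bigr)=\frac{p_2-p_1}{1-p_1}\Bigl(\tfrac{\partial}{\partial x}U(x_1,\thetaL)-\tfrac{\partial}{\partial x}U(x_1,\thetaH)\Bigr)\le 0 .
\]
Since $\tfrac{\partial}{\partial x}\Phi(x_1,p_1)=0$, this shows $\tfrac{\partial}{\partial x}\Phi(x_1,p_2)\le 0$; as $\tfrac{\partial}{\partial x}\Phi(\cdot,p_2)$ is strictly decreasing and vanishes at $x_2$, we conclude $x_2\le x_1$. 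Hence $x_L^\ast$ is decreasing in $p$ — strictly so whenever the single‑crossing inequality of Step~1 is strict at $x_1$ (e.g.\ whenever $\hU(\cdot,\thetaH)-\hU(\cdot,\thetaL)$ is strictly increasing, or $\ell(\thetaH)>\ell(\thetaL)$). Read together with Proposition~\ref{prop:x_L}, this gives a uniform picture: the low type's allocation is distorted downward to curb the information rent $U(x_L,\thetaH)-U(x_L,\thetaL)$, whose marginal cost to the utility grows with $p$.

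The only genuine obstacle is well‑posedness. The map $\Phi(\cdot,p)$ is a difference of a concave term, $U(\cdot,\thetaL)-(1-p)g$, and a convex term, $p\,U(\cdot,\thetaH)$, so strict concavity — and hence uniqueness of $x_L^\ast(p)$ together with the legitimacy of characterizing it by a first‑order condition — does not follow from Assumptions~\ref{ass:U_concave} and \ref{as:g_convex} alone. I would dispose of this exactly as the preceding proofs implicitly do, by taking strict concavity of the reduced objectives as a standing assumption; alternatively, if one is unwilling to assume it, the computation in Step~2 still shows $\tfrac{\partial^2}{\partial x\,\partial p}\Phi\le 0$ at every stationary point, which is the cross‑partial sign condition that makes the maximizer correspondence $p\mapsto x_L^\ast(p)$ nonincreasing by monotone comparative statics.
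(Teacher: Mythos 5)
Your argument is correct and uses the same outer scaffold as the paper (evaluate the derivative of the perturbed low-type objective at the unperturbed maximizer, sign it, and conclude via concavity), but the decisive sign step is obtained differently. Both proofs come down to showing that $\frac{\partial^2}{\partial x\,\partial p}\Phi(x,p)=g'(x)-\frac{\partial}{\partial x}U(x,\thetaH)$ is nonpositive at $x_L^\ast(p)$: the paper gets this (strictly) from the \emph{high-type's} problem, using the already-established ordering $x_L^\ast<x_H^\ast$ together with \eqref{eq:foc2_b} and concavity of $U(\cdot,\thetaH)-g$, so that $\frac{\partial}{\partial x}\left(U(x,\thetaH)-g(x)\right)\big|_{x=x_L^\ast(p)}>0$; you instead eliminate $g'(x_L^\ast)$ via the \emph{low-type's} own first-order condition and invoke the sorting/single-crossing property $\frac{\partial}{\partial x}U(x,\thetaH)\geq\frac{\partial}{\partial x}U(x,\thetaL)$, which you correctly lift from Assumption~\ref{ass:marginal} through \eqref{eq:marginal_utility} using $\eta'>0$ and $\ell(\thetaH)\geq\ell(\thetaL)$. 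Your route is self-contained (it never needs $x_L^\ast<x_H^\ast$ or the high-type problem) and makes explicit the monotone-comparative-statics structure, but it delivers only weak monotonicity unless single crossing is strict, whereas the paper's route yields the strict conclusion $x_L^\ast(p)>x_L^\ast(\hat p)$ directly. Two caveats: the strict concavity of $\Phi(\cdot,p)$ that you flag is indeed not implied by Assumptions~\ref{ass:U_concave} and \ref{as:g_convex} (because of the $-p\,U(x,\thetaH)$ term), and the paper silently makes the same assumption, so treating it as a standing hypothesis is fair; however, your fallback remark that a nonpositive cross-partial \emph{at stationary points only} suffices for monotone comparative statics is loose --- Topkis/Milgrom--Shannon arguments need decreasing differences (or single crossing in $(x;p)$) globally, so without the concavity hypothesis that sentence does not actually rescue the argument. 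With concavity assumed, your proof stands and matches the paper's level of rigor.
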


\begin{proof}
First, let us consider two distribution probabilities $p, \hat{p} \in[0,1]$ such
that $\hat{p} = p + \epsilon$, where $\epsilon>0$. Now, let us define $x_L^*(p)$
as optimal privacy policy that satisfies the first-order conditions in \eqref{eq:foc2}, for a population distribution $p$.
Also, let us consider the derivative of the objective function in \eqref{eq:P3b} for a population distribution $\hat{p}$:
\begin{equation}
\frac{\partial}{\partial x} f(x, \hat{p}) = \frac{\partial}{\partial x} \left( U(x, \thetaL) - \hat{p} U(x, \thetaH) - (1-\hat{p}) g(x) \right),
\end{equation}
where $f(x,\hat{p})$ is the objective function of the optimization problem.
This can be rewritten as
\begin{align}\label{eq:foc3}
\frac{\partial}{\partial x} f(x, \hat{p}) &= \frac{\partial}{\partial x} \left(
U(x, \thetaL) - p U(x, \thetaH) - (1-p) g(x) \right) \notag \\ 
 &\quad+ \epsilon \frac{\partial}{\partial x} \left( g(x) - U(x, \thetaH) \right).
\end{align}
Evaluating the previous equation in $x_L^*(p)$ we find that
\begin{equation}\label{eq:inex_p}
\frac{\partial}{\partial x} f(x_L^*(p), \hat{p})= \epsilon \frac{\partial}{\partial x} \left( g(x) - U(x, \thetaH) \right) \Big|_{x = x_L^*(p)} .
\end{equation}
The previous result follows since $x_L^*(p)$ satisfies the first-order conditions in \eqref{eq:foc2}.

Now, recall that the contract assigns higher privacy to agents with high-type,
i.e.,  $x_H^*>x_L^*$. Hence, if we evaluate the left-hand side of \eqref{eq:foc2_b} in $x_L^*(p)$, we find that 
\begin{equation}\label{eq:ineq3}
\frac{\partial}{\partial x} \left( U(x, \thetaH) - g(x)  \right) \Big|_{x = x_L^*(p)} > 0.
\end{equation}
%
Thus, we know that 
\begin{equation}
\frac{\partial}{\partial x} f(x_L^*(p), \hat{p}) < 0.
\end{equation}
This equation indicates that the the optimal contract with a distribution $\hat{p}$ satisfies
$x_L^*(p) > x_L^*(\hat{p})$.
\end{proof}

The previous result states that the optimal privacy for low-type agents $x_L^*$
is decreasing with respect to the population distribution $p$. Consequently,
$t_L^*$ is also decreasing with $p$. Furthermore, $x_H^*$ does not depend on $p$
since the high-type always gets an efficient allocation independent of the prior
on types.
This applies regardless of the risk probability $1-\eta(x)$.

Another aspect of the contract that changes with the introduction of risk is the cost $t$.
Results in Propositions \ref{prop:x_H} and \ref{prop:x_L} let us determine the impact of risk on the price $t$ paid by each user. 

\begin{proposition}
The price of the contracts with and without risk ($\{t_L^*, t_H^*\}$ and $\{\hat{t}_L^*, \hat{t}_H^*\}$ respectively) satisfy the following inequalities:
\begin{equation}
\left\{ 
\begin{array}{ll}
  t_L^* > \hat{t}_L^*-(1-\eta(\hat{x}_L^\ast))\ell(\theta_L), & \text{if } p < \bar{p} \\
t_L^* < \hat{t}_L^*, & \text{if } p > \bar{p} \\
t_H^* > \hat{t}_H^*, & \text{if } p>\bar{p},
1-\frac{1-\eta(x_H^\ast)}{1-\eta(x_L^\ast)}>\bar{p}\end{array}
\right.
\end{equation}
with $\bar{p} = \frac{\ell(\thetaL)}{\ell(\thetaH)} $.
\end{proposition}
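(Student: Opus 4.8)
The plan is to handle the three inequalities separately, and in each case to first record the structure that makes the comparison possible. As noted at the start of the subsection on contracts with loss risk, the constraint reduction of Section~\ref{sec:contracts} applies verbatim to the risk model with $U$ in place of $\hU$; hence in the risk-aware contract the individual-rationality constraint \eqref{eq:IR1} binds, $t_L^\ast=U(x_L^\ast,\thetaL)$, and the analog of \eqref{eq:IC1} binds, so that (mirroring \eqref{eq:newconst1} and \eqref{eq:newconst2}) $t_H^\ast=U(x_H^\ast,\thetaH)-U(x_L^\ast,\thetaH)+U(x_L^\ast,\thetaL)$, while without risk $\hat{t}_L^\ast=\hU(\hat{x}_L^\ast,\thetaL)$ and $\hat{t}_H^\ast=\hU(\hat{x}_H^\ast,\thetaH)-\hU(\hat{x}_L^\ast,\thetaH)+\hU(\hat{x}_L^\ast,\thetaL)$.

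For the two bounds on $t_L^\ast$ I would work entirely in terms of $x_L$. By \eqref{eq:U_risk}, $\hat{t}_L^\ast-(1-\eta(\hat{x}_L^\ast))\ell(\thetaL)=\hU(\hat{x}_L^\ast,\thetaL)-(1-\eta(\hat{x}_L^\ast))\ell(\thetaL)=U(\hat{x}_L^\ast,\thetaL)$, whereas $t_L^\ast=U(x_L^\ast,\thetaL)$. If $p<\bar{p}$, i.e.\ $\ell(\thetaL)-p\,\ell(\thetaH)>0$, the argument in the proof of Proposition~\ref{prop:x_L} yields the strict inequality $x_L^\ast>\hat{x}_L^\ast$, so strict monotonicity of $U$ in $x$ (Assumption~\ref{ass:U_concave}) gives $t_L^\ast>U(\hat{x}_L^\ast,\thetaL)$, which is the first claim. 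If $p>\bar{p}$ then $x_L^\ast<\hat{x}_L^\ast$ by the same proposition, and then $t_L^\ast=\hU(x_L^\ast,\thetaL)-(1-\eta(x_L^\ast))\ell(\thetaL)\le\hU(x_L^\ast,\thetaL)<\hU(\hat{x}_L^\ast,\thetaL)=\hat{t}_L^\ast$, using $(1-\eta(x_L^\ast))\ell(\thetaL)\ge 0$ and strict monotonicity of $\hU$.

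The bound on $t_H^\ast$ carries the real content. I would substitute \eqref{eq:U_risk} into the binding expression for $t_H^\ast$ and regroup to obtain
\[
t_H^\ast=\underbrace{\hU(x_H^\ast,\thetaH)-\hU(x_L^\ast,\thetaH)+\hU(x_L^\ast,\thetaL)}_{=:\,\tilde{t}_H}+\big(\eta(x_H^\ast)-\eta(x_L^\ast)\big)\ell(\thetaH)-\big(1-\eta(x_L^\ast)\big)\ell(\thetaL).
\]
The crucial observation is that the hypothesis $1-\frac{1-\eta(x_H^\ast)}{1-\eta(x_L^\ast)}>\bar{p}$ is, after clearing the positive denominators $1-\eta(x_L^\ast)$ and $\ell(\thetaH)$, exactly $\big(\eta(x_H^\ast)-\eta(x_L^\ast)\big)\ell(\thetaH)>\big(1-\eta(x_L^\ast)\big)\ell(\thetaL)$, so the two risk terms above are jointly positive and $t_H^\ast>\tilde{t}_H$. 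It then remains to show $\tilde{t}_H\ge\hat{t}_H^\ast$. The difference splits as $\tilde{t}_H-\hat{t}_H^\ast=\big(\hU(x_H^\ast,\thetaH)-\hU(\hat{x}_H^\ast,\thetaH)\big)+\big[\big(\hU(x_L^\ast,\thetaL)-\hU(x_L^\ast,\thetaH)\big)-\big(\hU(\hat{x}_L^\ast,\thetaL)-\hU(\hat{x}_L^\ast,\thetaH)\big)\big]$; the first summand is $\ge 0$ because $x_H^\ast\ge\hat{x}_H^\ast$ by Proposition~\ref{prop:x_H} and $\hU$ is increasing in $x$, and the bracket is $\ge 0$ because $x\mapsto\hU(x,\thetaL)-\hU(x,\thetaH)$ is decreasing by Assumption~\ref{ass:marginal} and $x_L^\ast<\hat{x}_L^\ast$ when $p>\bar{p}$ (Proposition~\ref{prop:x_L}). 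Chaining these, $t_H^\ast>\tilde{t}_H\ge\hat{t}_H^\ast$.

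I expect the difficulty here to be bookkeeping rather than a genuine obstacle: one must keep straight that the binding-constraint structure transfers to the risk model, track which earlier results furnish $x_H^\ast\ge\hat{x}_H^\ast$ and the sign of $x_L^\ast-\hat{x}_L^\ast$, and --- the one step with real content --- recognize the third hypothesis as precisely the positivity of the risk correction in the expansion of $t_H^\ast$. Once that identification is made, every inequality collapses to monotonicity of $U$, of $\hU$, and of the single-crossing difference $\hU(\cdot,\thetaH)-\hU(\cdot,\thetaL)$.
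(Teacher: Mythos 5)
Your proposal is correct and follows essentially the same route as the paper: the binding-constraint expressions for $t_L^\ast$ and $t_H^\ast$, monotonicity of $U$ and $\hU$ together with Propositions~\ref{prop:x_H} and~\ref{prop:x_L}, the sorting condition applied to $\hU(\cdot,\thetaH)-\hU(\cdot,\thetaL)$, and the observation that the hypothesis $1-\frac{1-\eta(x_H^\ast)}{1-\eta(x_L^\ast)}>\bar{p}$ is exactly the positivity of $(\eta(x_H^\ast)-\eta(x_L^\ast))\ell(\thetaH)-(1-\eta(x_L^\ast))\ell(\thetaL)$. The only difference is the order of the two estimates in the $t_H^\ast$ argument (you discard the risk terms first, the paper bounds the $\hU$-terms first), which is immaterial.
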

\begin{proof}
From \eqref{eq:newconst2} we know that the contract payment is
\begin{equation}\label{eq:price_l1}
t_L^* = U(x_L^*, \thetaL).
\end{equation}
Since $U(\cdot, \theta)$ is an increasing function in $x$, we can use Proposition \ref{prop:x_L} to conclude that
\begin{equation}
\left\{ 
\begin{array}{ll}
t_L^* > \hat{t}_L^*-(1-\eta(\hat{x}_L^\ast))\ell(\theta_L), & \text{if } p < \bar{p} \\
t_L^* < \hat{t}_L^*, & \text{if } p > \bar{p} \\
\end{array}
\right.
\end{equation}
On the other hand, the price paid by high-type users is given by
\begin{equation}\label{eq:price_h1}
t_H^* = U(x_H^*, \thetaH) - U(x_L^*, \thetaH) + U(x_L^*, \thetaL).
\end{equation}
%
Hence,
\begin{align}
  t_H^\ast=&\hU(x_H^\ast, \thetaH)-(1-\eta(x_H^\ast))\ell(\thetaH)-\hU(x_L^\ast,
  \thetaH)\notag\\
  &+(1-\eta(x_L^\ast))\ell(\thetaH) +\hU(x_L^\ast, \thetaL)-(1-\eta(x_L^\ast))\ell(\thetaL)
  \label{eq:thast}
\end{align}
By assumption $\hU(\cdot, \theta)$ is increasing in $x$ so that $\hU(x_H^\ast,
\thetaH)>\hU(\hat{x}_H^\ast, \thetaH)$ since $x_H^\ast>\hat{x}_H^\ast$ for all
$p$ by Proposition \ref{prop:x_H}. 
Furthermore, if we define 
\begin{equation}
h(x) = -( \hU(x, \thetaH) - \hU(x, \thetaL) ),
\end{equation}
then from Assumption \ref{ass:marginal} we know that $h(\cdot)$ is decreasing in $x$.
Hence, by Proposition 2, for $p>\bar{p}$ we have $x_L^\ast<\hat{x}_L^\ast$ so
that \eqref{eq:thast} becomes
\begin{align}
  t_H^\ast&>\hU(\hat{x}_H^\ast,
  \thetaH)-(1-\eta(x_H^\ast))\ell(\thetaH)-\hU(\hat{x}_L^\ast,
  \thetaH)\notag\\
  &\ \ +(1-\eta(x_L^\ast))\ell(\thetaH) +\hU(\hat{x}_L^\ast, \thetaL)-(1-\eta(x_L^\ast))\ell(\thetaL)\\
  &=\hat{t}_H^\ast+(\eta(x_H^\ast)-\eta(x_L^\ast))\ell(\thetaH)-(1-\eta(x_L^\ast))\ell(\thetaL)
  \label{eq:thatast}
\end{align}
By assumption $1-\frac{1-\eta(x_H^\ast)}{1-\eta(x_L^\ast)}>\bar{p}$, so that
$t_H^\ast>\hat{t}_H^\ast$.

%
\end{proof}

\begin{proposition}
\label{prop:inforent}
If $p>\bar{p}$, then the information rent under the optimal contract without loss risk is higher than under the optimal contract with loss risk, i.e.
\begin{equation}
\hU(\hx_L^\ast, \thetaH)-\hU(\hx_L^\ast, \thetaL)>U(x_L^\ast, \thetaH)-U(x_L^\ast, \thetaL).
\end{equation}
\end{proposition}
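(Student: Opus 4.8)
The plan is to reduce the claim to two facts already available: the characterization of the information rent under the reduced constraint set (which, by the remark opening Section~\ref{sec:risk}, transfers verbatim to the risk model with $\hU$ replaced by $U$), and the ordering $x_L^\ast < \hx_L^\ast$ supplied by Proposition~\ref{prop:x_L} under the hypothesis $p>\bar{p}$.

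First I would recall that the reduced constraints \eqref{eq:newconst1}--\eqref{eq:newconst2}, and hence the information-rent identity \eqref{eq:inforent}, apply to the risk setting once $\hU$ is replaced by $U$; consequently the information rent with loss risk is exactly $U(x_L^\ast,\thetaH)-U(x_L^\ast,\thetaL)$, which is the right-hand side of the inequality to be proved. Next I would expand this quantity using the definition \eqref{eq:U_risk}:
\[
U(x_L^\ast,\thetaH)-U(x_L^\ast,\thetaL)=\bigl(\hU(x_L^\ast,\thetaH)-\hU(x_L^\ast,\thetaL)\bigr)-\bigl(1-\eta(x_L^\ast)\bigr)\bigl(\ell(\thetaH)-\ell(\thetaL)\bigr).
\]
Since $\eta(x_L^\ast)\le 1$ and $\ell(\thetaH)\ge\ell(\thetaL)$ by Assumption~\ref{as:eta}, the subtracted term is nonnegative, so $U(x_L^\ast,\thetaH)-U(x_L^\ast,\thetaL)\le \hU(x_L^\ast,\thetaH)-\hU(x_L^\ast,\thetaL)$.

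Finally I would bring in monotonicity: by Assumption~\ref{ass:marginal} the map $x\mapsto \hU(x,\thetaH)-\hU(x,\thetaL)$ is strictly increasing, and by Proposition~\ref{prop:x_L} the hypothesis $p>\bar{p}$ yields $x_L^\ast<\hx_L^\ast$; hence $\hU(x_L^\ast,\thetaH)-\hU(x_L^\ast,\thetaL)<\hU(\hx_L^\ast,\thetaH)-\hU(\hx_L^\ast,\thetaL)$. Chaining this strict inequality with the previous weak one gives the result. I do not expect any real obstacle here: the only point requiring care is justifying that the information-rent expression transfers to the risk model (which the section's opening remark grants), and noting that the strictness is carried entirely by the monotonicity step so the first comparison need only be an inequality, not strict.
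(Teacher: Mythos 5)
Your proposal is correct and follows essentially the same route as the paper's proof: expand $U$ via \eqref{eq:U_risk} at $x_L^\ast$, drop the nonnegative loss term, then use the monotonicity of $x\mapsto\hU(x,\thetaH)-\hU(x,\thetaL)$ together with $x_L^\ast<\hx_L^\ast$ from Proposition~\ref{prop:x_L} to chain the inequalities. The only (cosmetic) difference is that you carry the strictness through the sorting-condition monotonicity step, whereas the paper obtains it from $\ell(\thetaL)<\ell(\thetaH)$ in the first comparison; your attribution of the monotonicity of the difference to Assumption~\ref{ass:marginal} is in fact the more precise citation.
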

\begin{proof}
Suppose that $p>\bar{p}$, then by Proposition~\ref{prop:x_L} we have $x_L^\ast<\hat{x}_L^\ast$. The information rent under $\{x_L^\ast, x_H^\ast, t_L^\ast, t_H^\ast\}$ is given by
\begin{align}
U(x_L^\ast, \thetaH)-U(x_L^\ast, \thetaL)=&\hU(x_L^\ast, \thetaH)-\hU(x_L^\ast, \thetaL)\notag\\
&+(1-\eta(x_L^\ast))(\ell(\thetaL)-\ell(\thetaH)).
\end{align}
Since $\ell(\thetaL)<\ell(\thetaH)$ by assumption, we have
\begin{align}
U(x_L^\ast, \thetaH)-U(x_L^\ast, \thetaL)<\hU(x_L^\ast, \thetaH)-\hU(x_L^\ast, \thetaL)
\end{align}
By Assumption~\ref{ass:U_concave}, $\hU$ is increasing in $x$. Hence,
\begin{align}
\hU(x_L^\ast, \thetaH)-\hU(x_L^\ast, \thetaL)<\hU(\hx_L^\ast, \thetaH)-\hU(\hx_L^\ast, \thetaL).
\end{align}
Thus,
\begin{align}
U(x_L^\ast, \thetaH)-U(x_L^\ast, \thetaL)<\hU(\hx_L^\ast, \thetaH)-\hU(\hx_L^\ast, \thetaL).
\end{align}
\end{proof}
We can conclude that when $ p < \bar{p}$,  the loss risk increases the privacy
contract of all users. Roughly speaking, this happens because losses of users
with low-type are significant with respect to losses of high-type users. In
consequence, the contract with loss risk allows users with low-type to have more
privacy.
On the other hand, when $p > \bar{p}$ losses of low-type users are not
significant and the contract will tend to offer less privacy settings to
low-type users. 
In this case the utility company can afford more losses due to risk in order to
collect higher fidelity data.

We can conclude that, regardless of the profit, a population of agents with $ p
> \bar{p}$ might be more beneficial for a utility company interested in collecting information from users.
Roughly speaking, a favorable environment for privacy contracts is characterized
by a large population of agents with high-type. 
In the next section we analyze the contract parameters and the utility company's profit as a function of the population distribution $p$.

\subsection{Profit}

The profit of the utility company is
\begin{align}\label{eq:profit}
\Pi(t_L, x_L, t_H, x_H)& = (1-p) \left( -g(x_L) + U(x_L, \thetaL) \right)\notag  \\
&\quad + p \left( -g(x_H) + t_H \right).
\end{align}
Recall that $x_H$ does not depend on $p$ and that 
$t_H$ is increasing with respect to $p$. Hence, $p \left( -g(x_H) + t_H \right)$ is increasing in $p$. 
Also, note that \eqref{eq:foc2} can be rewritten as
\begin{equation}\label{eq:inex_marginal}
\frac{\partial}{\partial x} \left( U(x, \thetaL) - g(x) \right)\Big|_{x = x_L^*(p)} = p 
\frac{\partial}{\partial x} \left( U(x, \thetaH) - g(x) \right)\Big|_{x = x_L^*(p)}
\end{equation}
From \eqref{eq:ineq3} we know that $\frac{\partial}{\partial x} \left( U(x,
\thetaH) - g(x) \right) \big|_{x = x_L^*(p)} $. Therefore,   $U(x_L^*(p),
\thetaH) - g(x_L^*(p))$ is increasing in $x_L^*(p)$.
%
Also, because $p\geq 0$, we know that $U(x_L^*(p), \thetaL) - g(x_L^*(p))$ is increasing in $x$. 
Considering that $x_L^*(p)$ is decreasing in $p$, 
we can conclude that $(1-p)(-g(x_L^*(p)) + U(x_L^*(p)), \thetaL)$ is decreasing in $p$. 

Thus, $\Pi$ is composed of an increasing term and a decreasing term
of 
$p$ so that the maximum profit might be achieved on the boundary, i.e. 
$p=0$ or $p=1$.
We explore the profit of the utility company in more detail through an example in the following section.


\section{Example -- Direct Load Control}
\label{sec:dlc}

Recall that the unit gain the utility company gets out of the privacy setting $\q$ is a
function $g:\mc{X}\rar\mb{R}$. In this section, we discuss a particular example
in which $g$ is a metric for how access to high-granularity data affects direct
load control (DLC). 

In previous work, we characterized the utility--privacy tradeoff
for a DLC problem of thermostatically controlled
loads (TCLs)~\cite{dong:2014ab}. In particular, we showed that the $\ell_1$--norm of
the error of the DLC (measured in terms of the $\ell_1$ distance between the
actual power consumed by the TCLs and the desired power consumption) increases
as a function of the sampling period, i.e. distance between samples. Empirically
the relationship between sampling
period and $\ell_1$--norm error is approximately quadratic.
Hence, for this example, we let
\begin{equation}
  g(\q)=\frac{1}{2}\zeta \q^2
  \label{eq:gxdlc}
\end{equation}
where $0<\zeta<\infty$.

Note that a decreased sampling rate corresponds to a higher privacy setting. The
function $g$ as defined is increasing in $\q$ so that $g(\q_L)<g(\q_H)$. 

Suppose that
$\theta\in\{\thetaL, \thetaH\}$ where $0<\thetaL<\thetaH$. Assume that the consumer's utility is 
given by
\begin{equation}
  \hat{U}(\q, \theta)=\q\theta
  \label{eq:agentutility}
\end{equation}
where $\q\in [0, 1]$, i.e. their utility is proportional to both the type
and quality. 
As in the previous section, we model the consumer's risk aversion using the
following utility:
\begin{equation}
  U(x, \theta)=\hU(x,\theta)-(1-\eta(x))\ell(\theta)
  \label{eq:riskaver}
\end{equation}

A higher privacy setting is
less likely to be successfully attacked; hence, for the sake of the example, we
take $1-\eta(x)=m(1-x)$ where $m>0$ is a constant and $x\in [0,1]$ with zero corresponding to a low-privacy
setting and one corresponding to a high-privacy setting.

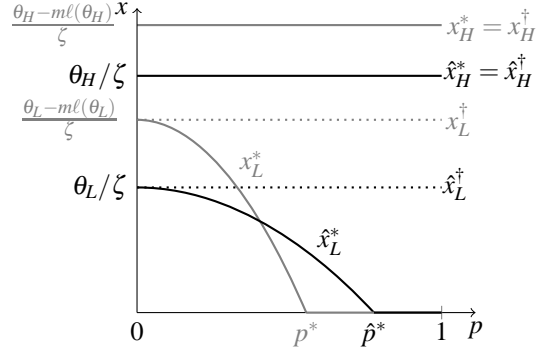
\begin{figure}[ht]
  \begin{center}
    \begin{tikzpicture}[scale=0.9]
\draw[->] (0,0) -- (5,0) node[anchor=north] {$p$};
\draw	(0,0) node[anchor=north] {0}
(2.5,0) node[anchor=north,gray] {$p^\ast$}
		(4.5,0) node[anchor=north] {1};
\draw[gray] (4.5,0.1) -- (4.5, -0.1);
\node[gray] at (4.75,2.85) {$\q_L^\dagger$};
\node[gray] at (1.7, 2.2) {$\q_L^\ast$};
\node[gray] at (5.25, 4.25) {$\q_H^\ast=\q_H^\dagger$};
\draw[thick,gray] (0,4.25) -- (4.5, 4.25);
\draw[dotted, thick,gray] (0,2.85) -- (4.5, 2.85);
\draw[->] (0,0) -- (0,4.5) node[anchor=east] {$\q$};
\node[gray] at (-1.1, 4.25) {$\frac{\thetaH-m\ell(\thetaH)}{\zeta}$};
\node[gray] at (-1, 2.85) {$\frac{\thetaL-m\ell(\thetaL)}{\zeta}$};
\draw[thick,gray] (0,2.85) parabola (2.5,0);
\draw[thick] (0, 1.85) parabola (3.5,0);
\node[] at (2.85,1.05) {$\hat{\q}_L^\ast$}; 
\draw[thick,gray] (2.5, 0) -- (4.5, 0);
\draw[thick] (0, 3.5) -- (4.5, 3.5);
\draw[thick, dotted] (0, 1.85) -- (4.5, 1.85);
\node[] at (4.7, 1.9) {$\hat{x}_L^\dagger$};
\node[] at (5.2, 3.575) {$\hat{x}_H^\ast=\hat{x}_H^\dagger$};
\draw[thick] (3.5,0) -- (4.5,0);
\node[below] at (3.5,0) {$\hat{p}^\ast$};
\node[left] at (0, 1.85) {$\thetaL/\zeta$};
\node[left] at (0, 3.5) {$\thetaH/\zeta$};

\end{tikzpicture}  
\end{center}
  \caption{Comparison between full information and asymmetric information
  solutions as a function of $p$ the probability of the high-type in the
  population for both the case when we consider risk (gray) and when we do not
  consider risk (black). The general shape of the curves stay the same for
  different values of $m$; changing $m$ from $0$ to $1$ only has an effect of shifting the
  critical value $p^\ast$ for the case with risk closer to the origin as well as causing
  $x_H^\ast$ to decrease.}
  \label{fig:info}
\end{figure}
 For the case without risk the
 first-best solution is given by
 \begin{equation}
   (\hat{\q}_H^\dagger, \hat{\q}_H^\dagger)=\left( \frac{\thetaH}{\zeta},
   \frac{\thetaL}{\zeta}
   \right) \ \text{and} \ (\hat{t}_L^\dagger, \hat{t}_H^\dagger)=\left(
   \frac{\thetaH^2}{\zeta}, \frac{\thetaL^2}{\zeta} \right).
   \label{eq:FBwoR}
 \end{equation}
 For the case with risk, the first-best solution is given by
 \begin{equation}
   (\q_H^\dagger, \q_L^\dagger)=\left( \frac{\thetaH-m\ell(\thetaH)}{\zeta},
   \frac{\thetaL-m\ell(\thetaL)}{\zeta} \right)
   \label{eq:FBwr}
 \end{equation}
and
\begin{equation}
  (t_H^\dagger, t_L^\dagger)=\left(\frac{\thetaH^2-m\thetaH\ell(\thetaH)}{\zeta},
  \frac{\thetaL^2-m\thetaL\ell(\thetaL)}{\zeta}\right).
  \label{eq:FBwrt}
\end{equation}
Let the utility company's prior be defined by
$p=\text{P}(\theta=\thetaH)$ and $(1-p)=P(\theta=\thetaL)$. Then the optimal solution to the screening
problem without risk is given by
\begin{equation}
  (\hat{\q}_H^\ast, \hat{\q}_L^\ast)=\left(
  \frac{\thetaH}{\zeta},\frac{1}{\zeta}\left[\thetaL-\frac{p}{1-p}(\thetaH-\thetaL)\right]_+\right)
  \label{eq:optH}
\end{equation}
where $[\cdot]_+=\max\{\cdot, 0\}$, i.e. 
 $\hat{\q}_L^\ast=0$ when $p\geq \hat{p}^\ast$, and
 \begin{align}
   \hat{t}_H^\ast&=\frac{\thetaH^2}{\zeta}-\frac{(\thetaL-\thetaH)}{\zeta}\left[\thetaL-\frac{p}{1-p}(\thetaH-\thetaL)\right]_+\\
  \hat{t}_L^\ast&=\frac{\thetaL}{\zeta}\left[\thetaL-\frac{p}{1-p}(\thetaH-\thetaL)\right]_+
%
  \label{eq:actualoptts}
\end{align}

Similarly, the optimal solution to the screening problem when there is risk is
given by
\begin{align}
  (x^\ast_H, x_L^\ast)=\Big( &\frac{1}{\zeta}(\thetaH+m\ell(\thetaH)),\notag\\
 & \frac{1}{\zeta}\Big[
  \frac{m\ell(\thetaL)-pm\ell(\thetaH)-p\thetaH+\thetaL}{1-p}
  \Big]_+ \Big)
  \label{eq:optRisk}
\end{align}
and
\begin{align}
  t_H^\ast&=t_L^\ast+x_H^\ast\thetaH-x_L^\ast\thetaH+m(x_H^\ast-x_L^\ast)\ell(\thetaH)\\
  t_L^\ast&=\frac{\thetaL}{\zeta}\Big[
  \frac{m\ell(\thetaL)-pm\ell(\thetaH)-p\thetaH+\thetaL}{1-p} \Big]_+ 
  \label{eq:ts}
\end{align}

The plots in Figures~\ref{fig:info}-\ref{fig:sw} show the fundamental properties
of the contract solution and the general shapes of the curves are invariant
under changes to parameters of the problem. 

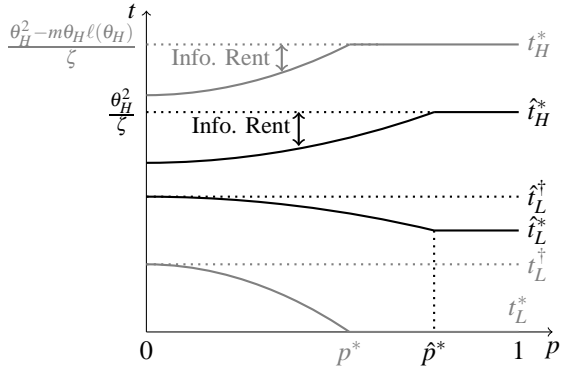
\begin{figure}[ht]
  \begin{center}
      \begin{tikzpicture}[scale=0.9]
    \draw[->] (0,0) -- (6,0) node[anchor=north] {$p$};
\draw[->] (0,0) -- (0,4.75) node[anchor=east] {$t$};
\draw[thick,gray] (0, 3.5) parabola (3.0, 4.25);
\draw[thick,gray] (3.0, 4.25) -- (5.5, 4.25);
\node[left,gray] at (0, 4.25) {$\frac{\thetaH^2-m\thetaH\ell(\thetaH)}{\zeta}$}; 
\draw[thick, dotted, gray] (0, 4.25) -- (3.5, 4.25);
\draw[<->, thick,gray] (2,4.25) -- (2,3.85);
\node[above, gray] at (1.1,3.8) {\small Info. Rent};
\node[gray,right] at (5.5, 4.25) {$t_H^\ast$};

\draw[thick,gray] (0, 1) parabola (3.0, 0);
\node[below,gray] at (3.0,0) {$p^\ast$};
\draw[thick,gray] (3.0, 0) -- (5.5, 0);
\draw[thick, dotted, gray] (0,1) -- (5.5,1);
\node[right, gray] at (5.5, 1) {$t_L^\dagger$};
\node[above,gray] at (5.5, 0) {$t_L^\ast$};

\draw[thick] (0,2.0) parabola (4.25,1.5);
\node[below] at (4.25,0) {$\hat{p}^\ast$};
\draw[dotted, thick] (4.25,1.5) -- (4.25,0);
\draw[thick] (4.25,1.5) -- (5.5, 1.5);
\draw[thick, dotted] (0,2.0) -- (5.5,2.0);
\node[right] at (5.5,2.0) {$\hat{t}_L^\dagger$};
\node[right] at (5.5, 1.5) {$\hat{t}_L^\ast$};
\node[right] at (5.5, 3.25) {$\hat{t}_H^\ast$};
\draw[thick] (0,2.5) parabola (4.25,3.25);
\draw[thick] (4.25, 3.25) -- (5.5, 3.25);
\draw[thick, dotted] (0,3.25) -- (4.25, 3.25);
\draw[<->, thick] (2.25,3.25) -- (2.25,2.75);
\node[above] at (1.4,2.8) {\small Info. Rent};
\node[left] at (0, 3.25) {$\frac{\thetaH^2}{\zeta}$};
\node[below] at (0,0) {$0$};
\node[below] at (5.5, 0) {$1$};
  \end{tikzpicture}

  \end{center}
\caption{Optimal prices as a function of $p$ for both the case with risk (gray)
and without (black). The information rent as a function of $p$ for both cases is
also shown.}
  \label{fig:prices}
\end{figure}
 \begin{figure}[ht]
   \begin{center}
     \begin{tikzpicture}[scale=0.9]
       \draw[->] (0,0) -- (6,0) node[anchor=north] {$p$};
\draw[->] (0,0) -- (0,4.75) node[anchor=east] {$\Pi$};

\draw[thick] (0,1) parabola (4,2);
\draw[thick] (4,2) -- (5.5,2.85);
\draw[thick, dotted] (4,2) -- (4,0);
\node[below] at (4,0) {$\hat{p}^\ast$};
\draw[thick, dashed] (0,1) -- (5.5, 2.85);
\node[left] at (0,1) {$\thetaL^2/(2\zeta)$};
\node at (4.75,1.75) {$\hat{\Pi}^\ast$};
\draw[->] (4.5, 1.9) -- (4.4, 2.2);
\node at (4.5, 3.25) {$\hat{\Pi}^\dagger$};
\draw[->] (4.5,3) -- (4.6, 2.7);

\draw[gray, thick] (0,0.5) parabola (2.25,2.5);
\draw[gray, thick] (2.25,2.5) -- (3.5,4.5);
\draw[gray, thick, dotted] (2.25,2.5) -- (2.25,0);
\node[below, gray] at (2.25,0) {$p^\ast$};
\draw[thick, gray, dashed] (0,0.5) -- (3.5,4.5);
\node[left, gray] at (0,0.25) {$\frac{\thetaL^2-\ell^2(\thetaL)m^2}{2\zeta}$};
\node[gray] at (2,3.75) {$\Pi^\dagger$};
\draw[gray,->] (2,3.5) -- (2.1, 3);
\node[gray] at (3.5, 3) {$\Pi^\ast$};
\draw[gray,->] (3.25,3.25) -- (3,3.5);
\node[below] at (0,0) {$0$};
\node[below] at (5.5,0) {$1$};
     \end{tikzpicture}
   \end{center}
   \caption{Profit of the utility company as a function of $p$ for both the case
   with risk $\Pi(p)$ (gray) and without $\hat{\Pi}(p)$ (black).  }
   \label{fig:profit}
 \end{figure}
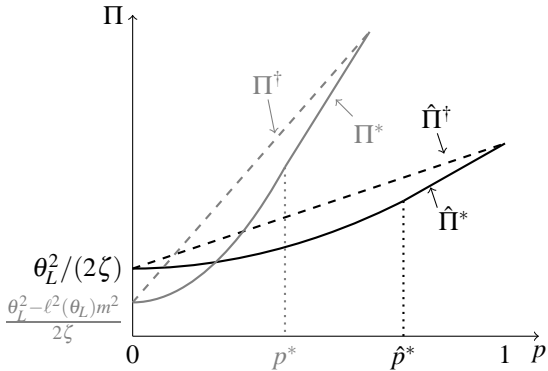
\begin{figure}[ht]
  \begin{center}
    \begin{tikzpicture}[scale=0.9]
      \draw[->] (0,0) -- (6,0) node[anchor=north] {$p$};
\draw[->] (0,0) -- (0,4.75) node[anchor=east] {$W$};
\draw[gray,thick] (2.25,1.25) parabola (0,0.5);
\draw[gray, thick] (2.25,1.25) -- (5.5,4.25);

\draw[thick] (4.0,1.8) parabola (0,1.2);
\draw[thick] (4.0,1.8) -- (5.5,2.8);
\draw[thick, dashed] (0,1.2) -- (5.5,2.8);
\node at (3.75,1.25) {$\hat{W}^\ast$};
\draw[->] (3.55,1.4) -- (3.35, 1.7);
\node at (5.2,1.8) {$\hat{W}^\dagger$};
\draw[->] (5.0, 1.9) -- (4.7, 2.5);

\draw[gray, thick, dashed] (0,0.5) -- (5.5,4.25);

\draw[dotted, thick] (5.5,3.0) -- (0,3.0);
\node[left] at (0,3.0) {$\frac{\thetaH^2}{2\zeta}$};
\draw[dotted, thick, gray] (5.5, 4.25) -- (0, 4.25);
\node[below, gray] at (2.25,0) {$p^\ast$};
\node[below] at (4.0,0) {$\hat{p}^\ast$};

\draw[dotted, thick, gray] (2.25,0) -- (2.25, 1.25);
\draw[dotted, thick] (4.0,0) -- (4.0, 1.8);

\node[gray] at (1,0.5) {$W^\ast$};
\draw[->,gray] (1,0.65) -- (0.9,0.95);
\node[gray] at (1.75,2.25) {$W^\dagger$};
\draw[->,gray] (1.9,2.15) -- (2.5, 2.3);

\node[below] at (5.5,0) {$1$};
\node[below] at (0,0) {$0$};
\draw[dotted, thick] (5.5,0) -- (5.5,4.25);
\node[left,gray] at (0,4.25) {$\frac{\thetaH^2-\ell^2(\thetaH)m^2}{2\zeta}$};
    \end{tikzpicture}
 \end{center}
  \caption{Social Welfare as a function of $p$ for both the case with risk
  $W^\ast$
  (gray) and without risk $\hat{W}^\ast$ (black). }
  \label{fig:sw}
\end{figure}
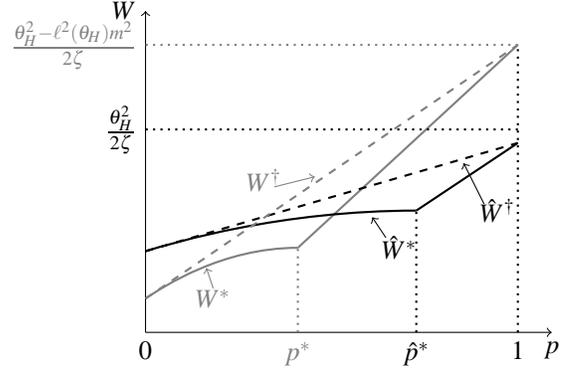

In Figure~\ref{fig:info}, we show that as the probability of the high-type being
drawn from the population increases, $\q_L^\ast$ (resp. $\hx_L^\ast$) decreases away from the optimal
full information solution $\q_L^\dagger$ (resp. $\hx_L^\dagger)$. This occurs until the critical point
\begin{equation}
  p^\ast=\frac{\thetaL+m\ell(\thetaL)}{\thetaH+m\ell(\thetaH)} \ \
  \left(\text{resp.}\ \hat{p}^\ast=\frac{\thetaL}{\thetaH}\right).
  \label{eq:critps}
\end{equation}
%
It is reasonable that as soon as the probability of the utility company facing a consumer of high-type reaches a critical point, they will focus all their efforts on this type of consumer since a high-type desires a higher privacy setting which results in a degradation of the DLC scheme.

In Figure~\ref{fig:prices}, we show the optimal prices for the first- and
second-best problems in both the case with risk and without. We see
that for $p\leq p^\ast$ (resp. $p\leq \hat{p}^\ast$) we have positive
information rent for the high-type. Essentially, when the probability of the
existence of a 
low-type is large relative to the probability of the existence of a high-type,
there is a \emph{positive externality} on the high-type. Thus  people who value high privacy more need to be compensated more to
participate in the smart grid. Further, the low-type
continues to get zero surplus since the individual rationality constraint of the
low-type is always binding.

In Figure~\ref{fig:profit} and \ref{fig:sw} respectively, we show the utility
company's expected profit and social welfare 
\begin{align}
W(p,t_L, x_L, t_H, x_H)=&\Pi(t_L, x_L, t_H, x_H)+p(U(x_H, \thetaH)-t_H)\notag\\
&\quad+(1-p)(U(x_L, \thetaL)-t_L),
\label{eq:welfare}
\end{align}
which is the sum of expected profit of the utility company and the consumer.
%
Notice the slope of the linear pieces of $\hat{W}^\ast$ and $W^\ast$; in
particular, $\hat{W}^\ast(p)$ for $p\geq \hat{p}^\ast$ is increasing at a slower
rate than $W^\ast(p)$ for $p\geq p^\ast$. Similarly, $\Pi^\ast(p)$ for $p\geq p^\ast$
   increases at a faster rate than $\hat{\Pi}^\ast(p)$ for $p\geq \hat{p}^\ast$. This is in part due to the fact that
 $t_H^\ast(p)-t_L^\ast(p)>\htt_H^\ast(p)-\htt_L^\ast(p)$ as is shown in
 Figure~\ref{fig:prices}. We remark that there are some
   values for $p$ for which the utility company's profit and the social welfare
   are lower when risk is considered; 
   this motivates adding compensation or insurance as a function of the
   population distribution into the contract.

\section{Discussion}
\label{sec:discussion}
As the capability of smart meters to collect data at high frequencies increases, we need to develop tools so that consumers and utilities benefit from these advances. 
Implementing privacy-aware data collection policies results in a reduction in the
fidelity of the data and hence, a reduction in the efficiency of grid operations that depend on that data.
This fundamental tradeoff provides an incentive for the utility company to offer
new service contracts.

In this paper we modeled electricity service as a product line differentiated according to
privacy. In this setting, consumers can self-select the level of privacy that
fits their needs and wallet. We derived privacy contracts both when loss risks
are considered and when they are not. We characterized the optimal
solution in each of the cases and provided a comparative study.
We showed that loss risks decrease the level service offered to each consumer type.


We remark that people who value high privacy more, need to be compensated more to
participate in the smart grid. If there are two contracts, then even consumers
who do not value privacy much will have an incentive to lie. Through the
screening mechanism, the consumer will report their type truthfully.
 The screening process is a way to do customer segmentation, the result of which can lead to targeting.
%
In particular, using knowledge of consumer preferences, the utility company
could then incentivize consumers based on their preferences to choose a low
privacy setting thereby increasing the granularity of data.

Further, we showed that the utility company has an incentive to purchase insurance and
invest in security when there are loss risks. We leave the questions around how much the utility company
should invest in insurance versus security for future work. There are a number of open questions in the design of insurance contracts to be offered to the utility company by a third party insurer given the
consumer faces loss risk. We have made initial efforts at studying insurance
contracts to be offered to the consumer~\cite{ratliff:2014aa}; however, there is
much to be done in understanding how the optimal contracts vary as a function of
the distribution of types and the privacy metric used. 


Other researchers have used contract theory for demand-side management such as DLC and demand response programs.  Given that smart meters can collect data at high frequencies, it would be interesting to consider the design of contracts with multiple goods --- e.g. privacy setting, DLC options --- in a multidimensional screening problem.  In such a setting, we may also model the consumer's private information (type) as multidimensional vector thereby increasing the practical relevance of the model. Further, Assumption~\ref{ass:marginal} is often referred to as the \emph{sorting condition}. One of the major difficulties in extending to the multidimensional case is the lack of being able to sort or compare across the different goods and their qualities~\cite{basov:2005aa}; however, a potential solution is to create a partial order of the multiple goods (benefits and privacy) available to consumers. 

In conclusion, there are multiple future research directions we can explore from the model we have presented in this paper. Our model provides a general a mathematical framework for considering privacy as part of a service contract between an electric utility and the consumer. 
This line of research can help inform data collection schemes and privacy policy in the smart grid.


\balance


%

%
%
%

\ifCLASSOPTIONcaptionsoff
  \newpage
\fi
\bibliographystyle{IEEEtran}
\bibliography{2014IEEETSG}

\end{document}